\def\namedlabel#1#2{\begingroup
    #2%
    \def\@currentlabel{#2}%
    \phantomsection\label{#1}\endgroup
}
\newtheorem{theorem}{Theorem}[section]
\newtheorem{lemma}[theorem]{Lemma}
\newtheorem{corollary}[theorem]{Corollary}
\newtheorem{conjecture}[theorem]{Conjecture}
\theoremstyle{definition}
\newtheorem{assumption}{Assumption}[section]
\newtheorem{remark}[theorem]{Remark}
\newcommand{\E}{{\mathbb{E}}}
\newcommand{\N}{{\mathbb{N}}}
\renewcommand{\P}{{\mathbb{P}}}
\newcommand{\R}{{\mathbb{R}}}
\newcommand{\diff}{\mathop{}\!\mathrm{d}}
\newcommand{\cF}{{\cal F}}
\newcommand{\D}{{\mathbb{D}}}
\title{Lower error bounds and optimality of approximation for jump-diffusion SDEs with discontinuous drift}
\author{Pawe{\l} Przyby{\l}owicz \and Verena Schwarz \and Michaela Sz\"olgyenyi}
\date{Preprint, March 2023}
\begin{document}

\maketitle


\begin{abstract}
In this note we prove sharp lower error bounds for numerical methods for jump-diffusion stochastic differential equations (SDEs) with discontinuous drift. We study the approximation of jump-diffusion SDEs with non-adaptive as well as jump-adapted approximation schemes and provide lower error bounds of order $3/4$ for both classes of approximation schemes. This yields optimality of the transformation-based jump-adapted quasi-Milstein scheme.\\

\noindent Keywords: jump-diffusion stochastic differential equations, discontinuous drift, jump-adapted scheme, lower bounds, optimality of approximation schemes\\
Mathematics Subject Classification (2020): 65C30, 65C20, 60H10, 68Q25
\end{abstract}


\section{Introduction}\label{sec:intro}

Our aim is to provide lower error bounds as well as an optimality result for the approximation of SDEs with discontinuous drift.

For this, we consider the following time-homogeneous jump-diffusion stochastic differential equation (SDE) with additive noise
\begin{align}\label{eq:SDE}
\diff X_t &= \mu(X_t) \diff t + \diff W_t+ \diff N_t , \quad t\in[0,1], \quad X_0= \xi.
\end{align}
Here $\xi \in\R$, $\mu \colon\R\to \R$ is a measurable function, $W=(W_t)_{t\in[0,1]}$ is a Brownian motion, and $(N_t)_{t\in[0,1]}$ is a homogeneous Poisson process with intensity $\lambda\in  (0,\infty)$ both defined on a filtered probability space $(\Omega,\cF,\mathbb{F},\P)$, where $\mathbb{F}=(\mathbb{F}_t)_{t\in[0,1]}$ is a filtration satisfying the usual conditions.

By \cite{PS20,PSX21} existence and uniqueness of the solution of the above SDE is well settled under our assumptions.

In the case without jumps upper error bounds for SDEs with discontinuous drift can be found in
\cite{halidias2006,halidias2008,LS16,ngo2016,LS17,ngo2017a,ngo2017b,LS18,muellergronbach2019,muellergronbach2019b,NSS19,Dareiotis2020,Neuenkirch2021,Dareiotis2021,Yaroslavtseva2022,Mullergronbach2022,Hu2022,Spendier2022} and upper error bounds in the jump-diffusion case are given in \cite{PS20,PSS22}.
For lower bounds in more regular settings we refer to \cite{Clark1980,Hofmann2001,Muellergronbach2004};
for lower bounds and optimality results for the approximation of jump-diffusion SDEs with continuous drift, see \cite{Przybylowicz2016,Kaluza2018,Przybylowicz2019a,Przybylowicz2019b,Przybylowicz2022,Kaluza2022}.
The first results on lower bounds for jump-free SDEs with discontinuous drift are \cite{hefter2018,MY20, Ellinger2022}. In \cite{MY20} convergence order $3/4$ is proven to be optimal for non-adaptive approximation schemes based on a finite number of approximations of the driving Brownian motion under the additional assumptions that the drift coefficient is monotonically increasing and bounded. In \cite{Ellinger2022} these additional assumptions are dropped by using the transformation technique introduced in \cite{LS16, LS17, muellergronbach2019b}.

Our note adds to the literature in the following aspect:
We provide the first lower error bound for the approximation of SDEs with discontinuous drift and in the presence of jumps. We prove bounds for both non-adaptive as well as jump-adapted schemes of order $3/4$ in $L^1$. The rate $3/4$ turns out to be optimal in the case of jump-adapted approximation schemes, since in \cite{PSS22} an upper error bound of the same order is proven in a more general setting for the transformation-based jump-adapted quasi-Milstein scheme.

\section{Preliminaries}\label{sec:pre}

For a Lipschitz continuous function \(f\) we denote by \(L_f\) its Lipschitz constant. For a vector $v\in\R^d$ with $d\in\N$ we denote by $v^T$ its transpose. For an integer $d\in\N$ and we denote by $\D([0,1],\R^d)$ the space of all càdlàg functions from $[0,1]$ to $\R^d$ equipped with the Skorokhod topology. Denote by $\nu_1, \ldots \nu_{N_1}$ the points of discontinuity of the path of the Poisson process on the interval $[0,1]$ ordered strictly increasing. Further, denote by $l^1$ the space of all real valued sequences $(x_n)_{n\in\N}$ with $\|(x_n)_{n\in\N}\| = \sum_{n=1}^\infty |x_n| <\infty$.

\begin{assumption}\label{ass:ex-un}
We assume that the drift coefficient $\mu\colon \R\to\R$ of SDE \eqref{eq:SDE} satisfies:
\begin{itemize}
\item[\namedlabel{mu1}{(i)}] it is piecewise Lipschitz continuous, i.e. there exist $k\in\N$ 
and 
$-\infty=\zeta_0<\zeta_1<\ldots < \zeta_k <\zeta_{k+1}=\infty$ such that
 $\mu$ is Lipschitz continuous on  $(\zeta_{i}, \zeta_{i+1})$ for every $i\in\{0, \ldots, k\}$.
\item[\namedlabel{mu2}{(ii)}] it is differentiable with Lipschitz continuous derivative on  $(\zeta_{i}, \zeta_{i+1})$  for every $i\in\{0, \ldots, k\}$.
\item[\namedlabel{mu3}{(iii)}] there exists \(i\in\{1,\dots,k\}\) such that \(\mu(\zeta_i+)\not=\mu(\zeta_i-)\).
\end{itemize}
\end{assumption}

Under the same assumptions on $\mu$ we introduce the jump-free SDE
\begin{equation}
    \begin{aligned}\label{newSDE}
    \diff Y_t = \mu(Y_t)\diff t + \diff W_t, \quad t\in[0,1], \quad Y_0 = \xi.
    \end{aligned}
\end{equation}
The existence and uniqueness of the solution $Y$ is guaranteed by \cite{LS16, LS17}.

For the proof of our main result we will reduce the complexity of our problem to be able to apply the lower bound for the jump-free case \cite[Theorem 1]{Ellinger2022}. For this we will need a joint functional representation for the solutions of SDE \eqref{eq:SDE} and SDE \eqref{newSDE}.

\begin{lemma}\label{FuncRep}
Assume that $\mu$ satisfies Assumption \ref{ass:ex-un} \ref{mu1}. Then there exists a Skorokod measurable mapping
\begin{equation*}
\begin{aligned}
F\colon \R \times \D([0,1],\R^3)\to \D([0,1],\R)
\end{aligned}
\end{equation*}
such that $F(\xi,(Id,W,N)^T)$ is the unique strong solution of SDE \eqref{eq:SDE} and $F(\xi,(Id,W,0)^T)$ is the unique strong solution of SDE \eqref{newSDE}.
\end{lemma}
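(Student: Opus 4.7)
The plan is to construct $F$ by interlacing: between the jumps of the driving path $n$, I use the strong-solution functional of the jump-free SDE \eqref{newSDE}, and at each jump time I add the corresponding jump size. Taking $n \equiv 0$ then gives $F(\xi, (Id, W, 0)^T) = Y$ immediately, while for $n = N$ the interlacing construction is the standard pathwise representation of the strong solution of \eqref{eq:SDE} (cf.~\cite{PS20, PSX21}), so uniqueness identifies it with the unique strong solution.

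First, I would build for every $0 \le s < t \le 1$ a Skorokhod-measurable functional
\[
\Phi_{s,t}\colon \R \times \D([s,t],\R^2) \to \D([s,t],\R)
\]
with the property that $\Phi_{s,t}(\eta, (Id, W|_{[s,t]})^T)$ is the unique strong solution on $[s,t]$ of $\diff Y_u = \mu(Y_u)\,\diff u + \diff W_u$ with $Y_s = \eta$. Existence and uniqueness under Assumption \ref{ass:ex-un} \ref{mu1} are due to \cite{LS16, LS17}. The realization as a deterministic path functional can be obtained by applying the bi-Lipschitz transformation $G$ of \cite{LS16, LS17} to reduce the SDE to one with Lipschitz coefficients, and then running Picard iteration: each iterate is a Borel functional of the driving path (only pathwise Lebesgue integrals of continuous paths appear), the iterates converge uniformly on $[s,t]$ by the Lipschitz property, and composition with $G^{-1}$ preserves Skorokhod measurability.

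Second, for a triple $(Id, w, n) \in \D([0,1], \R^3)$, denote by $m = m(n) \in \N \cup \{0, \infty\}$ the number of jumps of $n$ in $[0,1]$ and, on the stratum $\{m = k\}$ with $k < \infty$, by $0 < \nu_1 < \cdots < \nu_k \le 1$ their locations and by $\Delta_i := n_{\nu_i} - n_{\nu_i-}$ the corresponding jump sizes; set $\nu_0 := 0$ and $\nu_{k+1} := 1$. Starting from $F_0 = \xi$, I would recursively set
\[
F|_{[\nu_{i-1}, \nu_i)} = \Phi_{\nu_{i-1}, \nu_i}\bigl(F_{\nu_{i-1}}, (Id, w|_{[\nu_{i-1},\nu_i]})^T\bigr)\big|_{[\nu_{i-1}, \nu_i)}
\]
and $F_{\nu_i} = F_{\nu_i-} + \Delta_i$, ending with $F|_{[\nu_m, 1]} = \Phi_{\nu_m,1}(F_{\nu_m}, (Id, w|_{[\nu_m,1]})^T)$. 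On the null stratum $\{m = \infty\}$ I extend $F$ by an arbitrary Borel convention.

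The main obstacle is the measurability bookkeeping in the last step: the jump-location functionals are not Skorokhod continuous, so one cannot simply invoke continuity of compositions. However, the jump count $m$, the jump locations $\nu_1, \ldots, \nu_k$, and the jump sizes $\Delta_1, \ldots, \Delta_k$ are all Skorokhod-measurable functionals of $n$, and each stratum $\{m = k\}$ is Skorokhod-measurable; on such a stratum $F$ is a finite composition of Skorokhod-measurable maps, hence itself Skorokhod measurable. Patching these countably many strata yields the desired Skorokhod-measurable map $F$, which by construction has the two required specializations.
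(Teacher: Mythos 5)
There is a genuine gap, and it sits exactly at the point your construction relies on: the claim that the between-jump functional $\Phi_{s,t}$ can be obtained by applying the transformation $G$ and then running Picard iteration in which ``only pathwise Lebesgue integrals of continuous paths appear.'' After the transformation the SDE is no longer of additive-noise type: $Z=G(Y)$ solves $\diff Z_t=\widetilde\mu(Z_t)\diff t+\widetilde\sigma(Z_t)\diff W_t$ with $\widetilde\sigma=G'\circ G^{-1}$ state-dependent, so every Picard iterate contains an It\^o integral $\int \widetilde\sigma\bigl(Z^{(m)}_s\bigr)\diff W_s$. Such integrals are defined only almost surely with respect to the law of the driver and the iterates converge in $L^2$ (or a.s.\ along subsequences), not path by path; they do not furnish a deterministic Skorokhod-measurable map on all of $\D([s,t],\R^2)$. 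If instead you skip the transformation and try to solve the additive-noise equation pathwise via the random ODE for $Y-W$, uniqueness (and even existence) fails for arbitrary continuous input paths because $\mu$ is discontinuous, so again no everywhere-defined measurable solution functional drops out. Producing a measurable functional representation despite this is precisely the nontrivial ingredient the paper imports: it applies $G$ to \emph{both} \eqref{eq:SDE} and \eqref{newSDE}, checks that $(\widetilde\mu,\widetilde\sigma,\widetilde\rho)$ is Lipschitz, and then invokes the Skorohod-measurable universal functional representation of \cite[Theorem 3.1]{PSSS22}, which yields one map $\Psi$ working simultaneously for the drivers $(t,W_t,N_t)^T$ and $(t,W_t,0)^T$; setting $F=G^{-1}\circ\Psi(G(\cdot),\cdot)$ finishes the proof. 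Once you have to appeal to such a representation theorem anyway, the interlacing over jump strata becomes unnecessary, since the theorem already covers the jump equation directly.

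Two secondary points in your bookkeeping would also need attention even if $\Phi_{s,t}$ existed: (1) the defining property of $\Phi_{s,t}$ holds a.s.\ for each \emph{fixed} $(s,t,\eta)$, whereas you plug in the random times $\nu_{i-1},\nu_i$ and the random initial value $F_{\nu_{i-1}}$; identifying the spliced object with the strong solution of \eqref{eq:SDE} then requires a conditioning/flow argument (using independence of $N$ and $W$) to control the exceptional null sets. (2) For $F$ to be Skorokhod measurable on a stratum $\{m=k\}$ you need the family $(s,t,\eta,w)\mapsto\Phi_{s,t}(\eta,w)$ to be \emph{jointly} measurable in all arguments, since the endpoints vary with the input path; this is not automatic from measurability of each $\Phi_{s,t}$ separately.
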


\begin{proof}
First we recall the transformation form \cite{PS20}, which is a variant of the original construction from \cite{LS17}. 
Let $\phi\colon \R \to \R$ be defined by
\begin{align*}
\phi(u)=
\begin{cases}
(1+u)^3(1-u)^3 & \text{if } |u|\le 1,\\
0 & \text{otherwise}.
\end{cases}
\end{align*}
Based on this we define $G$ by 
\begin{align*}
G(x)=x+ \sum_{j=1}^k \alpha_j\phi\!\left(\frac{x-\zeta_j}{c}\right)(x-\zeta_j)|x-\zeta_j| ,
\end{align*}
where
\begin{align*}
\alpha_j
=\frac{\mu(\zeta_j-)-\mu(\zeta_j+)}{2},
\qquad j\in\{1,\dots,k\}
\end{align*}
and
\begin{align*} 
c\in\bigg(0,\min\bigg\{\min\limits_{1\le j\le k}\frac{1}{6|\alpha_j|},\min\limits_{1\le j\le k-1}\frac{\zeta_{j+1}-\zeta_j}{2}\bigg\}\bigg),
\end{align*}
when we use the convention $1/0=\infty$. This function $G$ satisfies the following properties, see \cite[Lemma 3.8 and Lemma 2.2]{LS17}:
\begin{itemize}
    \item  For all \(x\in\R\) it holds that $G'(x)>0$;
    \item \(G\) has a global inverse $G^{-1}\colon \R\to\R$;
    \item $G$ and $G^{-1}$ are Lipschitz continuous;
    \item $G\in C^1_b$, which means that $G$ is continuously differentiable and has a bounded derivative;
    \item $G'$ is a Lipschitz continuous function.
\end{itemize}

Now we apply the transformation $G$ to SDE \eqref{eq:SDE} and SDE \eqref{newSDE}. For SDE \eqref{eq:SDE} we get as in the proof of \cite[Theorem 3.1]{PS20} that $Z=G(X)$ satisfies 
\begin{align*}
	\diff Z_t=\widetilde\mu (Z_t)\diff t+\widetilde\sigma(Z_t)\diff W_t+\widetilde \rho(Z_{t-})\diff N_t,  \quad t\in[0,1], \quad Z_0= G(\xi),
\end{align*}
where for all \(\xi\in\R\)
\begin{align*}
\widetilde\mu =\Big(G'\mu + \frac{1}{2} G''\Big)\circ G^{-1}, \quad 
\widetilde\sigma=G'\circ G^{-1},\text{ and }\quad
\label{t_rho_def}
\widetilde \rho(x)=G(G^{-1}(x)+1)-x.
\end{align*}
It holds that $\widetilde\mu$, $\widetilde\sigma$, and  $\widetilde\rho$ are Lipschitz continuous.
We can also apply $G$ to SDE \eqref{newSDE} and obtain that $\widetilde Z = G(Y)$ satisfies 
\begin{align*}
    \diff \widetilde Z_t=\widetilde\mu (\widetilde Z_t)\diff t+\widetilde\sigma( \widetilde Z_t)\diff W_t,  \quad t\in[0,1], \quad \widetilde Z_0= G(\xi).
\end{align*} 
We rewrite these SDEs to
\begin{equation*}
\begin{aligned}
\diff Z_t &= (\widetilde\mu (Z_{t-}),\widetilde\sigma(Z_{t-}),\widetilde \rho(Z_{t-})) \diff (t,W_t,N_t)^T, \quad t\in[0,1], \quad Z_0= G(\xi),
\end{aligned}
\end{equation*}
and
\begin{equation*}
\begin{aligned}
\diff \widetilde Z_t &= (\widetilde\mu (\widetilde Z_{t-}),\widetilde\sigma( \widetilde Z_{t-}),\widetilde \rho(\widetilde Z_{t-})) \diff (t,W_t,0)^T, \quad t\in[0,1], \quad \widetilde Z_0= G(\xi).
\end{aligned}
\end{equation*}
The function $(\widetilde\mu,\widetilde\sigma,\widetilde\rho)$ satisfies \cite[Assumption 2.1]{PSSS22} and hence by the Skorohod measurable universal function representation proven in \cite[Theorem 3.1]{PSSS22} there exists a function $\Psi\colon \R\times \D([0,1],\R^3) \to \D([0,1],\R)$ such that
\begin{equation*}
\begin{aligned}
Z = \Psi(G(\xi),(t,W_t,N_t)^T_{t\in[0,1]}) \text{ and } \widetilde Z = \Psi(G(\xi),(t,W_t,0)^T_{t\in[0,1]}).
\end{aligned}
\end{equation*}
Next we apply for all $t\in[0,1]$, $G^{-1}$ to $Z_t$ respectively $\widetilde Z_t$ to obtain 
\begin{equation*}
    (X_t)_{t\in[0,1]} =\Biggl(G^{-1}\Bigl[\Bigl(\Psi(G(\xi),(t,W_t,N_t)^T_{t\in[0,1]})\Bigr)(t)\Bigr]\Biggr)_{t\in [0,1]}
\end{equation*}
and
\begin{equation*}
    (Y_t)_{t\in[0,1]} =\Biggl(G^{-1}\Bigl[\Bigl(\Psi(G(\xi),(t,W_t,0)^T_{t\in[0,1]})\Bigr)(t)\Bigr]\Biggr)_{t\in [0,1]}.
\end{equation*}

Defining
\begin{equation*}
\begin{aligned}
F\colon \R\times \D([0,1],\R^3) \to \D([0,1],\R), \quad F(\xi,x)(\cdot) = G^{-1}(\Psi(G(\xi),x)(\cdot)), 
\end{aligned}
\end{equation*}
which is as a concatenation of measurable functions again measurable, proves the claim. 
\end{proof}

\section{Main result}

\begin{theorem}\label{LBada}
 Assume that $\mu$ satisfies Assumption \ref{ass:ex-un}. Let $\xi\in\R$, let $W\colon[0,1]\times \Omega \to \R$ be a Brownian motion and $N\colon[0,1]\times \Omega \to \R$ a Poisson process with intensity $\lambda\in (0,\infty)$, which is independent of $W$. Let $X \colon [0,1]\times \Omega\to \R$ be the strong solutions of SDE \eqref{eq:SDE} on the time-interval $[0,1]$ with initial value $\xi$, driving Brownian motion $W$, and driving Poisson process $N$. Then  
 there exist $c\in (0,\infty)$ such that for all $n\in \N$,
 \begin{equation*}
\inf_{\substack{
       t_1,\dots ,t_n \in [0,1]\\
        g \colon\R^n \times l^1 \times \D([0,1],\R) \to \R\\
        \text{ measurable} \\
       }}	 
       \E\bigl[|X_1-g((W_{t_1}, \ldots, W_{t_n}),(W_{\nu_1},...,W_{\nu_{N_1}},0,...), (N_t)_{t\in[0,1]}))|\bigr]\geq \frac{c}{n^{3/4}}. 
\end{equation*}
\end{theorem}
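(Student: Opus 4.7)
The strategy is to reduce the jump-diffusion lower bound to the already established jump-free lower bound from \cite[Theorem 1]{Ellinger2022} by conditioning on the event that no Poisson jump occurs in $[0,1]$. Since $W$ and $N$ are independent and the drift coefficient $\mu$ satisfies the same assumptions needed by both the jump-diffusion SDE \eqref{eq:SDE} and the jump-free SDE \eqref{newSDE}, such a conditioning should preserve the distribution of $W$ while trivializing the jump information.

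Concretely, the plan is as follows. Let $A=\{N_1=0\}$; by the properties of the Poisson process, $\P(A)=e^{-\lambda}>0$, and on $A$ the path $(N_t)_{t\in[0,1]}$ is identically zero, the number of jump times is zero, and the sequence of Brownian values at jump times reduces to the zero element of $l^1$. Using Lemma \ref{FuncRep}, on $A$ we have
\begin{equation*}
X_1=F(\xi,(Id,W,N)^T)(1)=F(\xi,(Id,W,0)^T)(1)=Y_1,
\end{equation*}
where $Y$ is the strong solution of the jump-free SDE \eqref{newSDE}. Hence for any choice of $t_1,\dots,t_n\in[0,1]$ and any measurable $g\colon\R^n\times l^1\times\D([0,1],\R)\to\R$,
\begin{equation*}
\E\bigl[|X_1-g(\mathbf{W},\mathbf{W}^\nu,N)|\bigr]\geq \E\bigl[\mathds{1}_A\,|Y_1-\widetilde g(W_{t_1},\dots,W_{t_n})|\bigr],
\end{equation*}
where $\widetilde g(w_1,\dots,w_n):=g((w_1,\dots,w_n),\mathbf{0},0)$ is measurable, and $\mathbf{W},\mathbf{W}^\nu,\mathbf{0}$ denote the three arguments of $g$.

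By independence of $W$ and $N$, conditioning on $A$ does not alter the law of $W$, so
\begin{equation*}
\E\bigl[\mathds{1}_A\,|Y_1-\widetilde g(W_{t_1},\dots,W_{t_n})|\bigr]
= e^{-\lambda}\,\E\bigl[|Y_1-\widetilde g(W_{t_1},\dots,W_{t_n})|\bigr].
\end{equation*}
Since $\mu$ satisfies Assumption \ref{ass:ex-un}, the jump-free SDE \eqref{newSDE} falls within the scope of \cite[Theorem 1]{Ellinger2022}, which yields a constant $\tilde c>0$ independent of $n$, $t_1,\dots,t_n$ and $\widetilde g$ such that
\begin{equation*}
\E\bigl[|Y_1-\widetilde g(W_{t_1},\dots,W_{t_n})|\bigr]\geq \tilde c\, n^{-3/4}.
\end{equation*}
Taking the infimum on the left-hand side and setting $c:=e^{-\lambda}\tilde c$ gives the claim.

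The argument has essentially two moving parts: checking that on $\{N_1=0\}$ the entire triple of observations available to the estimator collapses to a function of $(W_{t_1},\dots,W_{t_n})$ alone — which is exactly what the joint functional representation from Lemma \ref{FuncRep} provides — and justifying the conditioning step via independence of $W$ and $N$. I expect the only subtle point to be verifying that the hypotheses of \cite[Theorem 1]{Ellinger2022} apply verbatim to \eqref{newSDE} under our Assumption \ref{ass:ex-un}; granting this, everything else is a clean conditioning argument with no asymptotic calculation required.
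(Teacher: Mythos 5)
Your proposal is correct and follows essentially the same route as the paper: restrict to the event $\{N_1=0\}$, use Lemma \ref{FuncRep} to identify $X_1$ with $Y_1$ there and collapse $g$ to a measurable function of $(W_{t_1},\dots,W_{t_n})$, factor out $\P(N_1=0)=e^{-\lambda}$ by independence of $W$ and $N$, and invoke \cite[Theorem 1]{Ellinger2022}. No substantive differences from the paper's argument.
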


\begin{proof}
Fix $n\in\N$, $t_1,\dots ,t_n \in [0,1]$, and $g \colon\R^n \times l^1 \times \D([0,1],\R) \to \R$ measurable. In the following we make use of considering only those $\omega\in\Omega$ for which $N_1(\omega)=0$. This implies that $N$ has no jump until time $1$. Using Lemma \ref{FuncRep} and observing that $(F({\xi}, (Id,W,0)^T)_t)_{t\in[0,1]}$ is the solution of \eqref{newSDE} we get
\begin{equation*}
    \begin{aligned}
    &\E\Bigl[\Bigl|X_1-g\Bigl((W_{t_1}, \ldots, W_{t_n}),(W_{\nu_1},...,W_{\nu_{N_1}},0,...), (N_t)_{t\in[0,1]}\Bigr)\Bigl|\Bigr]\\
    &\geq \E\bigl[|X_1-g((W_{t_1}, \ldots, W_{t_n}),(W_{\nu_1},...,W_{\nu_{N_1}},0,...), (N_t)_{t\in[0,1]})|\cdot\mathds{1}_{\{N_1=0\}}\bigr]\\
    &= \E\bigl[|F({\xi},(Id,W,N)^T)_1-g((W_{t_1}, \ldots, W_{t_n}),(W_{\nu_1},...,W_{\nu_{N_1}},0,...), (N_t)_{t\in[0,1]})|\cdot\mathds{1}_{\{N_1=0\}}\bigr]\\
    &= \E\bigl[|F({\xi},(Id,W,0)^T)_1-g((W_{t_1}, \ldots, W_{t_n}),(0,...), (0)_{t\in[0,1]})|\cdot\mathds{1}_{\{N_1=0\}}\bigr]\\
    &= \E\bigl[|F({\xi},(Id,W,0)^T)_1-g((W_{t_1}, \ldots, W_{t_n}),(0,...), (0)_{t\in[0,1]})|\bigr]\P(N_1=0)\\
    &= \E\bigl[|Y_1-g((W_{t_1}, \ldots, W_{t_n}),(0,...), (0)_{t\in[0,1]})|\bigr]\P(N_1=0).
    \end{aligned}
\end{equation*}
It holds that $\P(N_1=0) = \exp(-\lambda)>0$.
By  \cite[Theorem 1]{Ellinger2022} there exists a constant $c\in(0,\infty)$ such that for all $n\in\N$,
\begin{equation*}
    \begin{aligned}
    &\inf_{\substack{
    t_1,\dots ,t_n \in [0,1]\\
    h \colon \R^{n} \to \R \text{ measurable} \\
    }}	 \E\bigl[|Y_1-h(W_{t_1}, \ldots, W_{t_n})|\bigr]
    \geq \frac{c}{n^{3/4}}.
    \end{aligned}
\end{equation*}
Since $g((W_{t_1}, \ldots, W_{t_n}),(0,...), (0)_{t\in[0,1]}))$ can be also interpreted as $h(W_{t_1}, \ldots, W_{t_n})$ for $h\colon\R^n\to\R$, $h(x)=g(x, (0,..),(0)_{t\in [0,1]})$ for all $x\in\R^n$ it holds that
\begin{equation*}
    \begin{aligned}
    &\E\bigl[|X_1-g((W_{t_1}, \ldots, W_{t_n}),(W_{\nu_1},...,W_{\nu_{N_1}},0,...), (N_t)_{t\in[0,1]}))|\bigr]\\
    &\geq \E\bigl[|{Y}_1-g((W_{t_1}, \ldots, W_{t_n}),(0,...), (0)_{t\in[0,1]}))|\big|\bigr]\P(N_1=0)\\
    &\geq \inf_{\substack{
    t_1,\dots ,t_n \in [0,1]\\
    h \colon \R^{n} \to \R \text{ measurable} \\
    }}	 \E\bigl[|{Y}_1-h(W_{t_1}, \ldots, W_{t_n})|\bigr] \cdot \exp(-\lambda)\\
    &\geq \exp(-\lambda) \frac{c}{n^{3/4}}.
    \end{aligned}
\end{equation*}
Since this holds for all $n\in\N$, $t_1,\dots ,t_n \in [0,1]$, and $g \colon\R^n \times l^{1} \times \D([0,1],\R) \to \R$ measurable, the claim is proven.
\end{proof}

In \cite[Theorem 4.4]{PSS22} an upper error bound of order $3/4$ is proven for the so-called transformation-based jump-adapted quasi-Milstein scheme. In Theorem \ref{LBada} we prove a lower error bound of the same order, which holds for every class of SDEs that contain an SDE \eqref{eq:SDE} satisfying Assumption \ref{ass:ex-un}.
Hence, this implies optimality in the worst-case setting of the transformation-based jump-adapted quasi-Milstein scheme for every class of jump-diffusion SDEs that contain a particular equation \eqref{eq:SDE} and for which the coefficients satisfy the assumptions of \cite[Theorem 4.4]{PSS22}. In particular, we obtain that for SDE \eqref{eq:SDE} under Assumption \ref{ass:ex-un} the error rate $3/4$ is optimal and it is achieved by the transformation based jump-adapted quasi-Milstein scheme.

\begin{corollary}
    Assume that $\mu$ satisfies Assumption \eqref{ass:ex-un}. Let $\xi\in\R$, let $W\colon[0,1]\times \Omega \to \R$ be a Brownian motion and $N\colon[0,1]\times \Omega \to \R$ a Poisson process with intensity $\lambda\in (0,\infty)$, which is independent of $W$. Let $X \colon [0,1]\times \Omega\to \R$ be the strong solutions of SDE \eqref{eq:SDE} on the time-interval $[0,1]$ with initial value $\xi$, driving Brownian motion $W$, and driving Poisson process $N$. Further, let $X^{M,n}$ be the transformation based jump-adapted quasi-Milstein scheme based on $n$ equidistant gird points for all $n\in\N$. Then it holds that 
    \begin{equation*}
        \E\Bigl[\sup_{t\in[0,1]}\big|X_t-X^{M,n}_t\big|\Bigr] = \Theta(n^{-3/4}). 
    \end{equation*}    
\end{corollary}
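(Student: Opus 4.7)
The plan is to obtain the claimed $\Theta(n^{-3/4})$ by combining a known upper bound with the lower bound from Theorem~\ref{LBada}. Since $\Theta$ is an asymptotic two-sided bound, it suffices to show both $\E[\sup_{t\in[0,1]}|X_t-X^{M,n}_t|]=\mathcal{O}(n^{-3/4})$ and $\E[\sup_{t\in[0,1]}|X_t-X^{M,n}_t|]=\Omega(n^{-3/4})$.

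For the upper bound I would cite \cite[Theorem 4.4]{PSS22} directly. Under Assumption~\ref{ass:ex-un} the drift $\mu$ satisfies the hypotheses required there (piecewise Lipschitz with Lipschitz derivative on each piece), and the additive-noise setting of SDE~\eqref{eq:SDE} is a special case of what is treated in \cite{PSS22}. Therefore the transformation-based jump-adapted quasi-Milstein scheme $X^{M,n}$ based on $n$ equidistant grid points satisfies
\begin{equation*}
\E\Bigl[\sup_{t\in[0,1]}\bigl|X_t-X^{M,n}_t\bigr|\Bigr]\le C\,n^{-3/4}
\end{equation*}
for a constant $C\in(0,\infty)$ independent of $n$.

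For the lower bound I would argue that the supremum dominates the pointwise value at $t=1$ and then reduce to Theorem~\ref{LBada}. Concretely,
\begin{equation*}
\E\Bigl[\sup_{t\in[0,1]}\bigl|X_t-X^{M,n}_t\bigr|\Bigr]\ge \E\bigl[\bigl|X_1-X^{M,n}_1\bigr|\bigr].
\end{equation*}
The scheme $X^{M,n}$ is jump-adapted and uses $n$ equidistant grid points together with the jump times and the information of $N$, and it is a deterministic (measurable) functional of these inputs. Hence $X^{M,n}_1$ can be written as $g((W_{i/n})_{i=1}^n,(W_{\nu_1},\dots,W_{\nu_{N_1}},0,\dots),(N_t)_{t\in[0,1]})$ for some measurable $g\colon\R^n\times l^1\times\D([0,1],\R)\to\R$, which fits exactly the form of the admissible approximations in Theorem~\ref{LBada}. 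Taking the infimum over $g$ therefore gives $\E[|X_1-X^{M,n}_1|]\ge c\,n^{-3/4}$.

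The only point requiring a small amount of care is the justification that $X^{M,n}_1$ indeed has the required functional form; this is a standard consequence of the definition of the jump-adapted quasi-Milstein recursion, which at each step depends only on Brownian increments between grid points (equidistant nodes and Poisson jump times) and on the jump indicator process, and thus is not a real obstacle. Combining the two bounds yields the claim.
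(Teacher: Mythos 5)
Your proposal is correct and matches the paper's (implicit) argument: the paper states this corollary as a direct combination of the upper bound from \cite[Theorem 4.4]{PSS22} and the lower bound of Theorem~\ref{LBada}, using exactly the observation that the jump-adapted scheme's terminal value is an admissible approximation of the form $g((W_{t_1},\ldots,W_{t_n}),(W_{\nu_1},\ldots,W_{\nu_{N_1}},0,\ldots),(N_t)_{t\in[0,1]})$ and that the supremum dominates the error at $t=1$. No further comment is needed.
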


The following corollary shows that a lower error bound of order $3/4$ can also be obtained for non-adaptive approximation schemes. 

\begin{corollary}\label{LBeq}
Assume that $\mu$ satisfies Assumption \ref{ass:ex-un}. Let $\xi\in\R$, let $W\colon[0,1]\times \Omega \to \R$ be a Brownian motion and $N\colon[0,1]\times \Omega \to \R$ a Poisson process with intensity $\lambda\in (0,\infty)$, which is independent of $W$. Let $X \colon [0,1]\times \Omega\to \R$ be the strong solutions of SDE \eqref{eq:SDE} on the time-interval $[0,1]$ with initial value $\xi$, driving Brownian motion $W$, and driving Poisson process $N$. Then  
 there exist $c\in (0,\infty)$ such that for all $n\in \N$,
 \begin{equation*}
\inf_{\substack{
       t_1,\dots ,t_n \in [0,1]\\
        g \colon\R^{2n} \to \R
        \text{ measurable} \\
       }}	 
       \E\bigl[|X_1-g(W_{t_1}, \ldots, W_{t_n},N_{t_1},\ldots N_{t_n})|\bigr]\geq \frac{c}{n^{3/4}}. 
\end{equation*}
\end{corollary}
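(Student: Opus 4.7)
The proof will follow essentially the same reduction strategy as Theorem~\ref{LBada}, namely restricting attention to the event $\{N_1=0\}$ so that the jump-diffusion SDE~\eqref{eq:SDE} collapses onto the jump-free SDE~\eqref{newSDE}, and then invoking the jump-free lower bound of \cite[Theorem 1]{Ellinger2022}. The non-adaptive setting is in fact easier than the jump-adapted one because the approximation only reads $N$ at the deterministic times $t_1,\dots,t_n$; on the event $\{N_1=0\}$ all these readings automatically vanish.

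First I would fix $n\in\N$, $t_1,\dots,t_n\in[0,1]$, and a measurable $g\colon\R^{2n}\to\R$, and define the auxiliary function $h\colon\R^n\to\R$ by $h(x_1,\dots,x_n)=g(x_1,\dots,x_n,0,\dots,0)$, which is measurable as a composition of measurable maps. On the event $\{N_1=0\}$ we have $N_{t_i}=0$ for every $i\in\{1,\dots,n\}$, so
\begin{equation*}
g(W_{t_1},\dots,W_{t_n},N_{t_1},\dots,N_{t_n})\cdot\mathds{1}_{\{N_1=0\}}=h(W_{t_1},\dots,W_{t_n})\cdot\mathds{1}_{\{N_1=0\}}.
\end{equation*}

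Next I would use Lemma~\ref{FuncRep} to write $X_1=F(\xi,(Id,W,N)^T)_1$ and observe that on $\{N_1=0\}$ the driving path of $N$ is identically zero on $[0,1]$, hence $F(\xi,(Id,W,N)^T)_1=F(\xi,(Id,W,0)^T)_1=Y_1$. Combining these two identities with the independence of $W$ and $N$ gives
\begin{equation*}
\E\bigl[|X_1-g(W_{t_1},\dots,W_{t_n},N_{t_1},\dots,N_{t_n})|\bigr]
\geq \E\bigl[|Y_1-h(W_{t_1},\dots,W_{t_n})|\bigr]\cdot\exp(-\lambda),
\end{equation*}
exactly as in the proof of Theorem~\ref{LBada}.

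Finally I would apply \cite[Theorem 1]{Ellinger2022} to the jump-free SDE~\eqref{newSDE}: since Assumption~\ref{ass:ex-un} is precisely the hypothesis required there, we obtain a constant $c\in(0,\infty)$ independent of $n$, $t_1,\dots,t_n$, and $h$ such that
\begin{equation*}
\E\bigl[|Y_1-h(W_{t_1},\dots,W_{t_n})|\bigr]\geq \frac{c}{n^{3/4}}.
\end{equation*}
Taking the infimum over $t_1,\dots,t_n$ and $g$ and absorbing the positive factor $\exp(-\lambda)$ into the constant yields the claim.

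There is no real obstacle here beyond the bookkeeping: the only point worth checking is that the dummy function $h$ is again an arbitrary measurable function from $\R^n$ to $\R$, which is immediate from the construction, so that the infimum over $g\colon\R^{2n}\to\R$ is bounded below by the infimum over $h\colon\R^n\to\R$ appearing in Ellinger's bound.
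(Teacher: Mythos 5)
Your argument is correct and is essentially the same reduction the paper uses for Theorem~\ref{LBada}: restrict to $\{N_1=0\}$, identify $X_1$ with $Y_1$ via Lemma~\ref{FuncRep}, use independence of $W$ and $N$ to factor out $\P(N_1=0)=\exp(-\lambda)$, and invoke \cite[Theorem 1]{Ellinger2022}. The paper states the corollary without a separate proof, and an even shorter route is available: since $N_{t_1},\dots,N_{t_n}$ are measurable functionals of the path $(N_t)_{t\in[0,1]}$, every approximation $g(W_{t_1},\dots,W_{t_n},N_{t_1},\dots,N_{t_n})$ admissible in Corollary~\ref{LBeq} is a special case of those admitted in Theorem~\ref{LBada} (with the $l^1$-argument ignored), so the corollary follows directly by monotonicity of the infimum; your direct rerun of the reduction buys nothing extra but is equally valid.
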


\begin{remark} 
Corollary \ref{LBeq} does not imply any optimality results, since there is still a gap between the upper and lower bounds known from literature. The best upper bounds are provided in \cite{PS20}. There it is proven that the Euler--Maruyama scheme has convergence order $1/2$ in $L^2$, which implies the same rate in $L^1$. 
\end{remark}

\begin{conjecture}  
For SDE \eqref{eq:SDE} satisfying Assumptions \ref{ass:ex-un} the convergence rate $1/2$ is optimal in $L^2$ for non-adaptive algorithms, which evaluate $W$, $N$ at fixed time points.
\end{conjecture}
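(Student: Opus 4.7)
The plan is to recycle the conditioning strategy used for Theorem \ref{LBada} with essentially no modifications. The only difference between the two statements is the shape of the approximation's input: here the measurable function $g$ sees $W$ and $N$ at $n$ fixed deterministic times, whereas in Theorem \ref{LBada} it saw the jump times together with the entire Poisson path. Conveniently, on the event $\{N_1=0\}$ both input vectors degenerate to data depending only on $(W_{t_i})_{i=1}^{n}$, so the same reduction to the jump-free lower bound of \cite[Theorem 1]{Ellinger2022} should carry through.

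Concretely, I would first fix $n\in\N$, times $t_1,\dots,t_n\in[0,1]$, and a measurable $g\colon \R^{2n}\to\R$, and lower-bound the error by restricting to $\{N_1=0\}$. On this event one has $N_{t_i}=0$ for every $i$, and Lemma \ref{FuncRep} identifies $X_1$ with $Y_1$, since $F(\xi,(Id,W,N)^T)_1\mathds{1}_{\{N_1=0\}}=F(\xi,(Id,W,0)^T)_1\mathds{1}_{\{N_1=0\}}$. Exploiting independence of $W$ and $N$, the restricted expectation factors as
\[
\P(N_1=0)\,\E\bigl[|Y_1-h(W_{t_1},\dots,W_{t_n})|\bigr],
\]
where $h\colon\R^n\to\R$ is the measurable function $h(x)\coloneqq g(x,0,\dots,0)$.

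The last step is to apply \cite[Theorem 1]{Ellinger2022} to the infimum over such $h$ and time points; combined with $\P(N_1=0)=\exp(-\lambda)>0$ this yields the claimed lower bound with constant $c\exp(-\lambda)$, and taking the infimum over the original $t_1,\dots,t_n$ and $g$ finishes the argument. I do not anticipate a genuine obstacle: the proof is structurally identical to that of Theorem \ref{LBada}, and the only bookkeeping item is to check that $h(x)=g(x,0,\dots,0)$ lies in the admissible class of measurable functions $\R^n\to\R$ entering \cite[Theorem 1]{Ellinger2022}, which is immediate from the measurability of $g$.
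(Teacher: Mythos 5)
This statement is a \emph{conjecture}, which the paper deliberately leaves open, and your proposal does not prove it: it only reproduces the argument of Theorem \ref{LBada} and Corollary \ref{LBeq}, which yields a lower bound of order $n^{-3/4}$ (in $L^1$, hence also in $L^2$). That bound is strictly weaker than what the conjecture asserts. ``Rate $1/2$ is optimal'' means two things: the upper bound of order $n^{-1/2}$ (already known from the Euler--Maruyama result in \cite{PS20}) \emph{and} a matching lower bound of order $n^{-1/2}$, i.e.\ a proof that no non-adaptive algorithm evaluating $W$ and $N$ at fixed time points can converge faster than rate $1/2$. A lower bound $c\,n^{-3/4}$ does not exclude algorithms of rate, say, $0.7$, so it cannot establish optimality of $1/2$; this is exactly the gap the paper's Remark after Corollary \ref{LBeq} points out.

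The deeper reason your approach cannot be repaired within this strategy is that conditioning on $\{N_1=0\}$ throws away the jumps entirely and reduces the problem to the jump-free SDE \eqref{newSDE}, for which rate $3/4$ is \emph{achievable} (\cite{Ellinger2022}, \cite{muellergronbach2019b}). Hence no argument that works only on the no-jump event can ever produce a lower bound worse (i.e.\ larger) than order $n^{-3/4}$. The conjectured degradation to rate $1/2$ must come from the complementary events: with fixed observation times $t_1,\dots,t_n$, the algorithm cannot locate the jump times of $N$ more accurately than to within an interval of length roughly $1/n$, and one would have to quantify how this uncertainty, propagated through the discontinuous drift and the jump of size $1$, forces an $L^2$-error of order $n^{-1/2}$. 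That is a genuinely different (and currently missing) argument, which is precisely why the paper states this as a conjecture rather than a theorem.
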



\section*{Acknowledgements}
V. Schwarz and M. Szölgyenyi are supported by the Austrian Science Fund (FWF): DOC 78.


\setlength{\bibsep}{0pt plus 0.3ex}
{\footnotesize

}


\vspace{0.2em}
\centerline{\underline{\hspace*{16cm}}}

\noindent Pawe{\l} Przyby{\l}owicz  \\
Faculty of Applied Mathematics, AGH University of Science and Technology, Al.~Mickiewicza 30, 30-059 Krakow, Poland\\
pprzybyl@agh.edu.pl\\

\noindent Verena Schwarz \Letter \\
Department of Statistics, University of Klagenfurt, Universit\"atsstra\ss{}e 65-67, 9020 Klagenfurt, Austria\\
verena.schwarz@aau.at\\

\noindent Michaela Sz\"olgyenyi \\
Department of Statistics, University of Klagenfurt, Universit\"atsstra\ss{}e 65-67, 9020 Klagenfurt, Austria\\
michaela.szoelgyenyi@aau.at\\


\end{document}